\newtheorem{thm}{Theorem}
\newtheorem*{theorem}{Theorem}
\newtheorem*{proposition}{Proposition}
\begin{document}

\title[]{Directional Poincar\'{e} inequalities \\along mixing flows}

\author[]{Stefan Steinerberger} \thanks{Department of Mathematics, Yale University, 10 Hillhouse Avenue, New Haven, CT 06511, e-mail: \textsc{stefan.steinerberger@yale.edu}}
\address{Department of Mathematics, Yale University, 10 Hillhouse Avenue, New Haven, CT 06511, USA}
\email{stefan.steinerberger@yale.edu}
\subjclass[2010]{37A30 (primary), and 49Q20 (secondary)} 
\begin{abstract} We provide a refinement of the Poincar\'{e} inequality on the torus $\mathbb{T}^d$: there
exists a set $\mathcal{B} \subset \mathbb{T}^d$ of directions such that for every
$\alpha \in \mathcal{B}$ there is a $c_{\alpha} > 0$ with
$$ \|\nabla f\|_{L^2(\mathbb{T}^d)}^{d-1} \| \left\langle \nabla f, \alpha \right\rangle\|_{L^2(\mathbb{T}^d)} \geq c_{\alpha}\|f\|_{L^2(\mathbb{T}^d)}^{d} \qquad
\mbox{for all}~f\in H^1(\mathbb{T}^d)~\mbox{with mean 0.}$$
The derivative $\left\langle \nabla f, \alpha \right\rangle$ does not detect any oscillation in directions orthogonal to $\alpha$, however,
for certain $\alpha$ the geodesic flow in direction $\alpha$ is sufficiently mixing to compensate for that defect.
On the two-dimensional torus $\mathbb{T}^2$ the inequality
holds for $\alpha = (1, \sqrt{2})$ but fails for $\alpha = (1,e)$. Similar results should hold at a great level of generality on very general domains.
\end{abstract}
\maketitle

\section{Introduction and main result}
\subsection{Introduction} The classical Poincar\'{e} inequality on the torus $\mathbb{T}^d$ states
$$ \| \nabla f\|_{L^2(\mathbb{T}^d)} \geq \| f\|_{L^2(\mathbb{T}^d)}$$
for functions $f \in H^1(\mathbb{T}^d)$ with vanishing mean. A natural interpretation is that a function with small derivatives cannot substantially deviate from its mean on
a set of large measure. The purpose of this paper is to derive a substantial improvement; we first state the main result.

\begin{thm}[Directional Poincare inequality] There exists a set $\mathcal{B} \subset \mathbb{T}^d$  such that for every
$\alpha \in \mathcal{B}$ there is a $c_{\alpha} > 0$ so that
$$ \|\nabla f\|_{L^2(\mathbb{T}^d)}^{d-1} \| \left\langle \nabla f, \alpha \right\rangle\|_{L^2(\mathbb{T}^d)} \geq c_{\alpha}\|f\|_{L^2(\mathbb{T}^d)}^{d}$$
for all $f\in H^1(\mathbb{T}^d)$ with mean 0. If $d \geq 2$, then $\mathcal{B}$ is uncountable but Lebesgue-null.
\end{thm}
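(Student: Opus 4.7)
\medskip

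\textbf{Proof sketch.} The plan is to translate the inequality into Fourier space and then derive it from a pointwise Diophantine estimate via Jensen's inequality. Expanding $f = \sum_{k \in \mathbb{Z}^d \setminus \{0\}} \hat f(k) \, e^{2 \pi i \langle k, x\rangle}$ (the $k=0$ term drops by mean-zero) and applying Parseval yields
\[
\|f\|_{L^2}^2 = \sum_k |\hat f(k)|^2, \qquad \|\nabla f\|_{L^2}^2 = 4 \pi^2 \sum_k |k|^2 |\hat f(k)|^2, \qquad \|\langle \nabla f, \alpha\rangle\|_{L^2}^2 = 4 \pi^2 \sum_k \langle k, \alpha\rangle^2 |\hat f(k)|^2.
\]
Setting $w_k := |\hat f(k)|^2 / \|f\|_{L^2}^2$, a probability measure on $\mathbb{Z}^d \setminus \{0\}$, the inequality (after squaring and absorbing constants) becomes
\[
\Bigl(\sum_k |k|^2 w_k\Bigr)^{d-1} \Bigl(\sum_k \langle k, \alpha\rangle^2 w_k\Bigr) \;\geq\; c_\alpha',
\]
uniformly in $(w_k)$.

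I would then take as candidate set
\[
\mathcal{B} := \Bigl\{ \alpha : \gamma(\alpha) := \inf_{0 \neq k \in \mathbb{Z}^d} |k|^{d-1} |\langle k, \alpha\rangle| > 0 \Bigr\}.
\]
For $\alpha \in \mathcal{B}$ the pointwise bound $\langle k, \alpha\rangle^2 \geq \gamma(\alpha)^2 |k|^{-2(d-1)}$ implies
\[
\sum_k \langle k, \alpha\rangle^2 w_k \;\geq\; \gamma(\alpha)^2 \sum_k (|k|^2)^{-(d-1)} w_k \;\geq\; \gamma(\alpha)^2 \Bigl(\sum_k |k|^2 w_k\Bigr)^{-(d-1)},
\]
where the second inequality is Jensen applied to the convex function $t \mapsto t^{-(d-1)}$ on $(0, \infty)$. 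Rearranging yields the desired inequality with $c_\alpha$ proportional to $\gamma(\alpha)$.

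It remains to verify that $\mathcal{B}$ is uncountable and Lebesgue-null when $d \geq 2$. The condition $\gamma(\alpha) > 0$ is the classical ``badly approximable at the Dirichlet critical exponent $d-1$'' condition. For $d = 2$ and $\alpha = (\alpha_1, \alpha_2)$ with $\alpha_1 \neq 0$, a short calculation reduces it to the slope $\alpha_2 / \alpha_1$ having bounded partial quotients in its continued fraction, which is satisfied by quadratic irrationals like $\sqrt{2}$ but not by $e$, matching the abstract. Uncountability follows from the uncountability of badly approximable reals (and, in higher dimensions, from Schmidt's theorem on badly approximable vectors), while Lebesgue-nullity follows from Dirichlet's theorem plus Borel-Cantelli, since for a.e.\ $\alpha$ there are infinitely many $k$ driving $|k|^{d-1} |\langle k, \alpha\rangle|$ below any prescribed threshold. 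The analytic heart of the argument is thus the one-line Jensen step; the main obstacle is the description of $\mathcal{B}$, which appeals to standard but nontrivial results from Diophantine approximation, especially in dimensions $d \geq 3$.
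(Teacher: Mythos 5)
Your argument is correct, and its analytic core differs from the paper's in an interesting way. Both proofs use the identical Diophantine input --- the lower bound $|\langle k, \alpha\rangle| \geq \gamma(\alpha)\,|k|^{-(d-1)}$ for $\alpha$ built from a badly approximable linear form (Perron/Schmidt for existence, Khintchine for nullity) --- but where you apply Jensen's inequality to the convex function $t \mapsto t^{-(d-1)}$ on the probability measure $w_k = |\hat f(k)|^2/\|f\|_{L^2}^2$, the paper instead runs a frequency-truncation argument: it shows by a Chebyshev-type estimate that at least half of the $L^2$ mass of $f$ sits on frequencies $|k| \leq 2\|\nabla f\|_{L^2}/\|f\|_{L^2}$, and then bounds the multiplier $|k|^{-(2d-2)}$ from below uniformly on that ball. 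Your Jensen step is shorter and yields a cleaner constant ($c_\alpha$ directly proportional to $\gamma(\alpha)$, without the extra factor $2^{-(d-1)}/\sqrt{2}$ from the truncation); the paper's localization argument is slightly more robust, since it only needs a lower bound on $\inf_{|k|\leq R}|\langle k,\alpha\rangle|$ rather than a convex minorant of the multiplier, which is what lets the paper recycle it verbatim for the pseudodifferential variants where one only assumes $|P(k)| \to \infty$. One small correction to your closing remarks: the Lebesgue-nullity of the badly approximable set is not a consequence of Dirichlet's theorem plus the (first) Borel--Cantelli lemma --- Dirichlet only gives $|k|^{d-1}|\langle k,\alpha\rangle| = O(1)$ infinitely often, and the convergence half of Borel--Cantelli points the wrong way; the statement that for a.e.\ $\alpha$ the quantity drops below every threshold is precisely the divergence case of Khintchine's theorem, which is the result the paper cites for this purpose.
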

The exponents are optimal. The proof is simple and based on elementary properties of Fourier series -- we believe it to be of great interest to understand under which conditions comparable inequalities exist on a general Riemannian manifold $(M,g)$ equipped with a suitable vector field.

\begin{center}
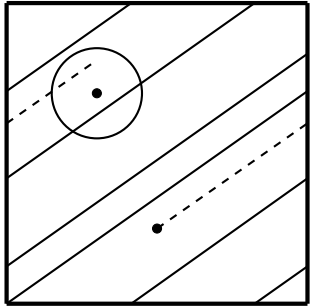
\begin{figure}[h!]
\centering
\begin{tikzpicture}[scale=4]
\draw[ultra thick] (0,0) -- (1,0);
\draw[ultra thick] (1,1) -- (1,0);
\draw[ultra thick] (1,1) -- (0,1);
\draw[ultra thick] (0,1) -- (0,0);
\filldraw (0.3,0.7) circle (0.015cm);
\filldraw (0.5,0.25) circle (0.015cm);
\draw [thick] (0.3,0.7) circle (0.15cm);
\draw[dashed, thick] (0.5,0.25) -- (1,0.6);
\draw[dashed, thick] (0,0.6) -- (0.3,0.81);
\draw[thick] (0,0) -- (1,1/1.4142);
\draw[thick] (0.4142,1) -- (0,1/1.4142);
\draw[thick] (0.4142,0) -- (1,0.59/1.4142);
\draw[thick] (0,0.59/1.4142) -- (0.824208,1);
\draw[thick] (0.824208,0) -- (1,0.1243) ;
\draw[thick] (0,0.1243) -- (1,0.831416);
\end{tikzpicture}
\caption{A well-mixing flow transports (dashed) every point relatively quickly to a neighborhood of every other point.}
\end{figure}
\end{center}

On the torus, the inequality has strong ties with number theory and can be easily derived at the cost of invoking highly nontrivial results (Schmidt's result on badly approximable numbers, the Khintchine theorem). One remarkable feature is that the inequality holds for a set of Lebesgue measure 0 which shows it to be very delicate (however, as is explained below, slightly weaker statements seem to be very robust). A natural interpretation of the inequality seems to be the following: given two nearby points $x,y \in \mathbb{T}^d$ for which $f(x) \gg f(y)$,
the classical Poincar\'{e} inequality will detect a large gradient between them. The term $| \left\langle \nabla f, \alpha \right\rangle |$ might not
detect the large gradient but following the ergodic vector field will relatively quickly lead to a neighborhood of $y$.  A priori being in a
neighborhood might not imply much because there could be still local oscillations on the scale of the neighborhood, however, since
we also invoke a power of $\|\nabla f\|_{L^2(\mathbb{T}^d)}$, this controls the measure of the set on which local oscillations have a strong effect. This heuristic suggests strongly that similar inequalities should hold at a much greater level of generality. We discuss and prove some natural variants in the last section.

\subsection{Open problems} It would be of great interest to understand to which extent such inequalities can be true in a more general setup. It is not even clear to us whether comparable inequalities hold in $L^p(\mathbb{T}^d)$. Generally, for suitable vector fields $Y$ on suitable Riemannian manifolds $(M,g)$ it seems natural to ask whether there exists an inequality of the type
$$ \|\nabla f\|_{L^p(M)}^{1-\delta} \| \left\langle \nabla f, Y \right\rangle f \|^{\delta}_{L^p(M)} \geq c\|f\|_{L^p(M)}^{}$$
for some $\delta > 0$ and all $f\in W^{1,p}(M)$ with mean 0. The parameter $\delta$ can be expected to be related to the mixing properties of the flow -- it is difficult to predict what the \textit{generic}
behavior on a fixed manifold might be (say, for a smooth perturbation of the flat metric on the torus). On $\mathbb{T}^2$ we can rephrase the Khintchine theorem \cite{ki} as a statement about generic behavior for the flat metric.
\begin{theorem}[Khintchine, equivalent] For every $\delta < 1/2$, the set of $\alpha \in \mathbb{T}^2$ for which there exists a $c_{\alpha} >0$ such that
$$  \forall f \in H^1(\mathbb{T}^2)\qquad \int_{\mathbb{T}^2}{f(x)dx} = 0 \implies \|\nabla f\|_{L^2(\mathbb{T}^2)}^{1-\delta} \| \left\langle \nabla f, \alpha \right\rangle f \|^{\delta}_{L^2(\mathbb{T}^2)} \geq c_{\alpha}\|f\|_{L^2(\mathbb{T}^2)}^{}$$
has full measure.
\end{theorem}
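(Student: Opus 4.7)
The plan is to reduce the statement, via Fourier analysis, to a Diophantine approximation property of $\alpha$, and then verify that property on a set of full measure by a direct Borel--Cantelli computation. Let $f \in H^1(\mathbb{T}^2)$ have mean zero and expand $f(x) = \sum_{k \in \mathbb{Z}^2 \setminus \{0\}} \hat f(k)\, e^{2\pi i \langle k,x\rangle}$. Plancherel then identifies the three norms in play as $\|f\|_{L^2}^2 = \sum_k |\hat f(k)|^2$, $\|\nabla f\|_{L^2}^2 = 4\pi^2 \sum_k |k|^2 |\hat f(k)|^2$, and $\|\langle \nabla f, \alpha\rangle\|_{L^2}^2 = 4\pi^2 \sum_k |\langle k,\alpha\rangle|^2 |\hat f(k)|^2$, where we identify $\alpha \in \mathbb{T}^2$ with a representative in $[0,1]^2 \subset \mathbb{R}^2$.

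The algebraic heart is the tautological factorisation
$$ |\hat f(k)|^2 \;=\; \frac{\bigl(|k|^2 |\hat f(k)|^2\bigr)^{1-\delta}\,\bigl(|\langle k,\alpha\rangle|^2 |\hat f(k)|^2\bigr)^{\delta}}{|k|^{2(1-\delta)}\, |\langle k,\alpha\rangle|^{2\delta}}. $$
Summing in $k$ and applying H\"older's inequality with conjugate exponents $1/(1-\delta)$ and $1/\delta$ shows that the desired inequality holds with a positive constant as soon as
$$ (\ast)\qquad \inf_{k \in \mathbb{Z}^2 \setminus \{0\}}\, |k|^{1-\delta}\, |\langle k,\alpha\rangle|^{\delta} \;>\; 0, $$
equivalently $|\langle k,\alpha\rangle| \geq c_\alpha |k|^{-\tau}$ for all $k \neq 0$, with $\tau := (1-\delta)/\delta$. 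Conversely, testing the Poincar\'{e} inequality against the single Fourier mode $f(x) = \cos(2\pi \langle k,x\rangle)$ forces $(\ast)$, so the set of admissible $\alpha$ coincides, up to a Lebesgue null set, with the set of $\alpha$ satisfying $(\ast)$.

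It thus suffices to show that $(\ast)$ holds for a.e.\ $\alpha \in \mathbb{T}^2$ when $\delta < 1/2$. The hypothesis $\delta < 1/2$ is precisely $\tau > 1$. For fixed $c > 0$ and $k \neq 0$ the bad set
$$ E_{k,c} \;=\; \{\alpha \in [0,1]^2 : |\langle k,\alpha\rangle| < c\, |k|^{-\tau}\} $$
is a strip of width $\lesssim c\, |k|^{-\tau-1}$, so $\sum_{k \neq 0} |E_{k,c}| \lesssim c \sum_{n \ge 1} n^{-\tau}$, which is finite precisely because $\tau > 1$. Hence $|\bigcup_{k}E_{k,c}| \to 0$ as $c \to 0$, which means that for a.e.\ $\alpha$ there is a $c_\alpha > 0$ with $\alpha \notin \bigcup_k E_{k,c_\alpha}$; this is $(\ast)$.

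\textbf{Main obstacle.} The only genuinely delicate point is the critical value $\delta = 1/2$, i.e.\ $\tau = 1$: there the series $\sum_n n^{-1}$ diverges, Borel--Cantelli breaks down, and capturing a full-measure statement would require the finer Khintchine--Groshev divergence theorem with logarithmic weights. The subcritical hypothesis $\delta < 1/2$ is precisely what lets us get away with the elementary summation argument above; everything else is routine.
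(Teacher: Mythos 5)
Your proof is correct, but it takes a genuinely different route from the one the paper has in mind, in two respects. First, for the deduction of the functional inequality from the Diophantine condition $(\ast)$, the paper reuses the mechanism of its proof of Theorem 1: split frequency space at the scale $R = 2\|\nabla f\|_{L^2}/\|f\|_{L^2}$, observe that at least half of the $L^2$-mass of $f$ lives on $\{|k| \leq R\}$, and bound the multiplier $|\langle k,\alpha\rangle|$ from below on that ball. Your tautological factorisation of $|\hat f(k)|^2$ followed by H\"older with exponents $1/(1-\delta)$ and $1/\delta$ replaces this with a one-line interpolation on the Fourier side; it is cleaner, gives the same constant dependence, and makes transparent why the exponent pairing $(1-\delta,\delta)$ matches the Diophantine exponent $\tau = (1-\delta)/\delta$. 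Second, the paper obtains the full-measure statement by quoting Khintchine's metric theorem (for $\alpha=(1,\beta)$ and $\|k_2\beta\|$), whereas you prove the needed convergence half from scratch by a union bound over the strips $E_{k,c}$, using that $\sum_{k\neq 0} |k|^{-\tau-1} \asymp \sum_n n^{-\tau} < \infty$ exactly when $\tau>1$, i.e.\ $\delta<1/2$; this makes the argument self-contained and correctly identifies $\delta=1/2$ as the critical threshold where the series diverges. One cosmetic remark: the statement as printed contains a stray factor $f$ inside $\|\langle\nabla f,\alpha\rangle f\|_{L^2}^\delta$ (a typo, compare Theorem 1 and the later variants), and you have implicitly and correctly read it as $\|\langle\nabla f,\alpha\rangle\|_{L^2}^\delta$; your converse observation, that single exponentials force $(\ast)$, confirms that this reading makes the exponent range sharp.
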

This suggests $\delta < 1/2$ as a natural threshold that might be achievable by other two-dimensional examples -- however, since the inequality is extremely delicate on $\mathbb{T}^2$, the manifolds on which the inequality holds with $\delta = 1/2$ might actually be very rare. One would expect topological ingredients to arise on the sphere $\mathbb{S}^{d}$ equipped with a nontrivial vector field: the hairy
ball theorem dictates that any smooth vector field vanishes for even $d$ and this will necessitate a change of scaling in the inequality since a function $f$ could be concentrated around the point in which the vector field vanishes. Furtermore, while not every nonvanishing vector field on $\mathbb{S}^3$ has to have a closed orbit (i.e. Seifert's conjecture is false), many of them do -- this puts topological restrictions on what directional Poincar\'{e} inequalities are possible (since one could set a function to be constant along a periodic orbit and decaying extremely quickly away from it). However, there should be a variety of admissible inequalities on the flat infinite
cylinder $(M,g) = (\mathbb{R} \times \mathbb{T}^{d-1}, \mbox{can})$ and this could be a natural starting point for future investigations.

\section{Proof of the Statements}
\subsection{Outline of the argument.} The proof of the classical Poincar\'{e} inequality on the torus is a one-line argument if one expands in Fourier series and uses $\hat{f}(0) = \int_{\mathbb{T}^d}{f}= 0$ since
$$ \| \nabla f\|^2_{L^2(\mathbb{T}^d)} = (2\pi)^d \sum_{k \in \mathbb{Z}^d \atop k \neq 0}{|k|^2 |a_k|^2} \geq  (2\pi)^d\sum_{k \in \mathbb{Z}^d \atop k \neq 0}{|a_k|^2} = \| f\|^2_{L^2(\mathbb{T}^d)}.$$
The argument also highlights the underlying convexity of the quadratic form.
Our proof will be a direct variation of that result and uses the observation that
\begin{align*}
 \| \left\langle \nabla f, \alpha \right\rangle \|^2_{L^2(\mathbb{T}^d)} &= \left\| \left\langle \sum_{k \in \mathbb{Z}^d}{a_k k e^{i k \cdot x}}, \alpha \right\rangle \right\|^2_{L^2(\mathbb{T}^d)} \\
&= \left\|   \sum_{k \in \mathbb{Z}^d}{a_k \left\langle k, \alpha \right\rangle e^{i k \cdot x} } \right\|^2_{L^2(\mathbb{T}^d)}\\
&=  (2\pi)^d \sum_{k \in \mathbb{Z}^d}{|a_k|^2 |\left\langle k, \alpha \right\rangle|}^2.
\end{align*}
This Fourier multiplier is not uniformly bounded away from 0 and will even vanish for certain $k \in \mathbb{Z}^d$ if the entries of $\alpha$ are
not linearly independent over $\mathbb{Q}$. If the entries of $\alpha$ are linearly independent over $\mathbb{Q}$, then
the Fourier multiplier is always nonnegative but we have no quantitative control on its decay (see below for an example). However, if we restrict 
the support of the Fourier coefficients to be within $\left\{k \in \mathbb{Z}^d: |k| \leq R \right\}$, then trivially
$$  \| \left\langle \nabla f, \alpha \right\rangle \|^2_{L^2(\mathbb{T}^d)} \geq \left(\inf_{k \in \mathbb{Z}^d \atop |k| \leq R}{|\left\langle k, \alpha \right\rangle|^2}\right) \|f\|_{L^2(\mathbb{T}^d)}^2.$$
The term in the bracket clearly has great significance in the study of geometry of numbers and has been studied for a long time. It suffices for us
to apply the results and use the additional $\left\| \nabla f\right\|_{L^2(\mathbb{T}^d)}$ expression to ensure that a fixed proportion of the $L^2-$mass
is contained within a suitable ball of frequeny space on which to apply the argument.

\subsection{Number theoretical properties.}
We now discuss subtleties of the inequality in greater detail: it is merely the classical Poincar\'{e} inequality for $d=1$. Letting $d=2$ with $\alpha = (1,0)$ yields 
$$ \|\nabla f\|_{L^2(\mathbb{T}^2)} \| \partial_x f \|_{L^2(\mathbb{T}^2)} \geq c\|f\|_{L^2(\mathbb{T}^2)}^{2} \qquad \mbox{which is obviously false,}$$
because $f$ might be constant along the $x-$direction and vary along the $y-$direction. More generally, the inequality fails for any
$\alpha$ with entries linearly dependent over $\mathbb{Q}$ and the functions $\sin{(k_1 x + k_2 y)}$ for any $k_1, k_2 \in \mathbb{Z}^2$ with $\left\langle (k_1, k_2), \alpha \right\rangle = 0$ serve
as counterexamples. The next natural example is $\alpha = (\sqrt{2},1).$
Suppose $f \in C^{\infty}(\mathbb{T})$ and
 $$ \left\| \left\langle \nabla f, (\sqrt{2},1) \right\rangle\right\|_{L^2(\mathbb{T}^2)} =0.$$ $f$ is constant
along the flow of the vector field $(\sqrt{2}, 1)$ but every orbit is dense and thus $f \equiv 0$. 
This is true for any vector with entries that are linearly independent over $\mathbb{Q}$, however, it is not enough
to prove the inequality itself: it fails for $(1,e)$ on $\mathbb{T}^2$ despite linear independence.
A simple construction for $d=2$ shows that linear independence of the entries of $\alpha$ is not enough:
 let  
$$ \alpha = \left(1, \sum_{n=1}^{\infty}{\frac{1}{10^{n!}}}\right) \sim (1, 0.110001\dots)$$
where the arising number, Liouville's constant, is known to be irrational. If we set
$$ f_N(x,y) = \sin{\left(10^{N!}\left( \sum_{n=1}^{N}{\frac{x}{10^{n!}}} - y\right)\right)}, $$
then 
$$\|f_N\|^2_{L^2(\mathbb{T}^2)} = 2\pi^2 \qquad \mbox{and} \qquad \|\nabla f_N\|_{L^2(\mathbb{T}^2)} \leq 6\cdot 10^{N!}$$
while
$$ \left\| \left\langle \nabla f_N, \alpha \right\rangle \right\|_{L^2(\mathbb{T}^2)} = \sqrt{2\pi^2}\left(\sum_{n=N+1}^{\infty}{\frac{10^{N!}}{10^{n!}}}\right)  \ll 10^{-2\cdot N!} \qquad \mbox{for}~N \geq 3.$$

\subsection{An explicit example.} The inequalities are not only sharp with respect to exponents, they are actually sharp \textit{on all frequency scales}. This is in stark contrast to classical
Poincar\'{e}-type inequality which tend to be sharp for one function (the ground state of the underlying physical system): here, we can exclude all functions having Fourier support in the
set $\left\{ \xi: |\xi| \leq N \right\}$ for arbitrarily large $N$ and still find functions for which the inequality is sharp (up to a constant). We explain this in greater detail for $d=2$ with the admissible
direction given by the golden ratio
$$ \alpha = \left(1, \frac{1 + \sqrt{5}}{2}\right) \in \mathcal{B}.$$
 Consider the sequence of functions given by
$$ f_n(x,y) = \sin{\left(F_{n+1}x - F_{n}y\right)},$$
where $F_n$ is the $n-$th Fibonacci number. An explicit computation shows that
$$
  \|\nabla f_n\|_{L^2(\mathbb{T}^2)} \left\| \partial_x f_n +  \frac{1 + \sqrt{5}}{2} \partial_y f_n \right\|_{L^2(\mathbb{T}^2)} 
=\sqrt{\frac{F_{n+1}^2}{F_n^2} +1}  \left|\frac{F_{n+1}}{F_n} - \frac{1 + \sqrt{5}}{2} \right|F_n^2 \|f_n\|_{L^2(\mathbb{T}^2)}^2 
$$
A standard identity for Fibonacci numbers gives that
$$ \lim_{n \rightarrow \infty}{\left|\frac{F_{n+1}}{F_n} - \frac{1 + \sqrt{5}}{2} \right|F_n^2} = \frac{1}{\sqrt{5}},$$
which implies that
$$ \lim_{n \rightarrow \infty}{   \|\nabla f_n\|_{L^2(\mathbb{T}^2)} \left\| \partial_x f_n +  \frac{1 + \sqrt{5}}{2} \partial_y f_n \right\|_{L^2(\mathbb{T}^2)}  \|f_n\|_{L^2(\mathbb{T}^2)}^{-2} =  
\sqrt{\frac{5+\sqrt{5}}{10}}} =  \frac{|\alpha|}{\sqrt{5}}.$$
Since these functions $f_n$ have their Fourier transform supported on 4 points in $\mathbb{Z}^2$ and since $F_{n+1}/F_{n} \rightarrow (1+\sqrt{5})/2 < 2$, we can conclude that
every dyadic annulus in Fourier space contains an example for which the inequality is sharp (up to a constant). Put differently, our inequality is close to being attained on every frequency scale. 
This sequence of $f_n$ has the advantage of simultaneously showing that the following statement is sharp.

\begin{proposition} Let $d=2$ and $\alpha \in \mathbb{T}^2$ be any vector for which 
$$ \|\nabla f\|_{L^2(\mathbb{T}^2)} \| \left\langle \nabla f, \alpha \right\rangle\|_{L^2(\mathbb{T}^2)} \geq c_{\alpha}\|f\|_{L^2(\mathbb{T}^2)}^{2}$$
holds for all $f \in H^1(\mathbb{T}^2)$ with mean 0. Then the constant satisfies
$$ c_{\alpha} \leq  \frac{|\alpha|}{\sqrt{5}}.$$
\end{proposition}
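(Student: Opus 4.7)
The plan is to use pure trigonometric test functions, reducing the problem to a one-dimensional inhomogeneous Diophantine approximation bound supplied by Hurwitz's theorem. First I would take $f_{m,n}(x,y) = \sin(mx + ny)$ for $(m,n) \in \mathbb{Z}^2 \setminus \{0\}$. A direct computation yields
$$\|f_{m,n}\|_{L^2(\mathbb{T}^2)}^2 = 2\pi^2, \quad \|\nabla f_{m,n}\|_{L^2(\mathbb{T}^2)}^2 = 2\pi^2(m^2+n^2), \quad \|\langle \nabla f_{m,n}, \alpha\rangle\|_{L^2(\mathbb{T}^2)}^2 = 2\pi^2(m\alpha_1 + n\alpha_2)^2,$$
so that the ratio appearing in the inequality collapses to
$$\frac{\|\nabla f_{m,n}\|_{L^2(\mathbb{T}^2)}\,\|\langle \nabla f_{m,n},\alpha\rangle\|_{L^2(\mathbb{T}^2)}}{\|f_{m,n}\|_{L^2(\mathbb{T}^2)}^2} \;=\; \sqrt{m^2+n^2}\;|m\alpha_1 + n\alpha_2|.$$
Testing with every $(m,n)$ therefore forces $c_\alpha \leq \inf_{(m,n)\neq 0}\sqrt{m^2+n^2}\,|m\alpha_1 + n\alpha_2|$.

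Next I would note that, as already pointed out in the number-theoretic discussion, admissibility of $\alpha$ requires $\alpha_1, \alpha_2$ to be linearly independent over $\mathbb{Q}$; in particular $\alpha_1 \neq 0$ and $\beta := -\alpha_2/\alpha_1$ is irrational. Hurwitz's theorem then produces a sequence of coprime integer pairs $(m_k, n_k)$ with $n_k \to \infty$, $m_k/n_k \to \beta$, and
$$n_k^2\left|\beta - \frac{m_k}{n_k}\right| \;\leq\; \frac{1}{\sqrt{5}},$$
which rewrites as $n_k\,|m_k\alpha_1 + n_k\alpha_2| \leq |\alpha_1|/\sqrt{5}$.

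To conclude, I would combine these two estimates. Since $m_k/n_k \to \beta$, one has
$$\frac{\sqrt{m_k^2+n_k^2}}{n_k} \;\longrightarrow\; \sqrt{1+\beta^2} \;=\; \frac{|\alpha|}{|\alpha_1|},$$
and hence
$$c_\alpha \;\leq\; \liminf_{k\to\infty} \sqrt{m_k^2+n_k^2}\;|m_k\alpha_1 + n_k\alpha_2| \;=\; \lim_{k\to\infty}\frac{\sqrt{m_k^2+n_k^2}}{n_k}\cdot n_k\,|m_k\alpha_1+n_k\alpha_2| \;\leq\; \frac{|\alpha|}{|\alpha_1|}\cdot\frac{|\alpha_1|}{\sqrt{5}} \;=\; \frac{|\alpha|}{\sqrt{5}}.$$
I do not anticipate any real obstacle: once the trigonometric test is made, the only nontrivial ingredient is the classical Hurwitz bound, and the golden-ratio example computed immediately before the statement corresponds precisely to the extremal direction at which the bound $|\alpha|/\sqrt{5}$ is attained in the limit (for generic admissible $\alpha$ the argument actually gives a strictly smaller $\liminf$).
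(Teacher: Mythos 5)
Your proposal is correct and follows essentially the same route as the paper: test the inequality on a single Fourier mode so that the ratio reduces to $\sqrt{m^2+n^2}\,|m\alpha_1+n\alpha_2|$, then invoke Hurwitz's theorem on the irrational ratio of the components and pass to the limit along the approximating sequence. The only differences (using $\sin(mx+ny)$ instead of a complex exponential, and approximating $-\alpha_2/\alpha_1$ rather than $\alpha_1/\alpha_2$) are cosmetic.
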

Up to certain transformation, the example above is essentially the only example for which the inequality is tight: normalizing $\alpha = (1, \beta)$, the example shows that the inequality is sharp for $\beta = (1+\sqrt{5})/2$ and there are only countably many other $\beta$ for which it is sharp (that can be explicitely given). For all other numbers the upper bound could be improved to $c_{\alpha} \leq |\alpha|/\sqrt{8}$. Removing
yet another countable set of exceptional directions, we could replace $\sqrt{8}$ by $\sqrt{221}/5$ and the process could be continued (this follows from classical results about the structure of the Markov spectrum, see \cite{mark}).

\subsection{Badly approximable systems of linear forms.} We now introduce the relevant results from number theory. Let $L_1, \dots, L_{\ell}: \mathbb{Z}^d \rightarrow \mathbb{R}$ be defined as
\begin{align*}
L_1(\textbf{x}) &= \alpha_{11}x_1 + \dots \alpha_{1d}x_d = \left\langle \alpha_1, \textbf{x} \right\rangle \\
\dots &\qquad  \qquad   \qquad   \qquad   \qquad  \\
L_{\ell}(\textbf{x}) &= \alpha_{\ell 1}x_1 + \dots \alpha_{ \ell d}x_d = \left\langle \alpha_{\ell}, \textbf{x} \right\rangle 
\end{align*}
The relevant question is whether it is possible for all $\ell$ expressions
to be very close to an integer. Using $\left\| \cdot \right\|:\mathbb{R} \rightarrow [0, 1/2]$ to denote the distance to the closest integer, the
pigeonhole principle implies the existence of infinitely many $\textbf{x} \in \mathbb{Z}^d$ with
$$ \max(\| L_1(\textbf{x}) \|, \dots, \| L_{\ell}(\textbf{x}) \|) \leq  \left( \max(|x_1|, \dots, |x_d|) \right)^{-\frac{d}{\ell}}.$$
Dirichlet's theorem cannot be improved in the sense that there actually exist badly approximable vectors
$\alpha_1, \dots, \alpha_{\ell}$ such that for some $c >0$ and all $\textbf{x} \in \mathbb{Z}^d$
$$ \max(\| L_1(\textbf{x}) \|, \dots, \| L_{\ell}(\textbf{x}) \|)  \geq  c\left( \max(|x_1|, \dots, |x_d|) \right)^{-\frac{d}{\ell}}.$$
The existence of such elements was first shown by Perron \cite{per}. Khintchine \cite{ki} has shown that the $\ell d-$dimensional Lebesgue measure
 of such tuples $(\alpha_1, \dots, \alpha_{\ell})$ is 0 and Schmidt \cite{schm} has proven
that their Hausdorff dimension is $\ell d$. 

\subsection{Proof of Theorem 1.} \begin{proof} We will prove the statement explicitely for the following set: for any $(d-1)-$dimensional badly approximable vector $\alpha_{d-1}$, 
consider the linear form $L:\mathbb{Z}^{d-1} \rightarrow \mathbb{R}$ given by
$$ L(\textbf{x}) := \left\langle \alpha_{d-1}, \textbf{x} \right\rangle$$
and
$$ \alpha = (1, \alpha_{d-1}).$$
Recall that $\left\| \cdot \right\|:\mathbb{R} \rightarrow [0, 1/2]$ denotes the distance to the closest integer and is trivially 1-periodic. Let now $k \in \mathbb{Z}^{d}$
with $k \neq 0$. If $k$ vanishes on all but the first component, then
$$ |\left\langle \alpha, k \right\rangle | = |k_1| \geq 1.$$
If $k$ does not vanish on all but the first component, then
$$
 |\left\langle \alpha, k \right\rangle | =  |k_1 + L((k_2, \dots, k_d))| \geq || L((k_2, \dots, k_d)) || 
\geq \frac{c_{}}{\max(|k_2|, \dots, |k_{d-1}|)^{d-1}} \geq \frac{c}{|k|^{d-1}},
$$

where $c > 0$ is some constant coming from the fact that $\alpha_{d-1}$ is badly approximable. Let now
$$ f = \sum_{k \in \mathbb{Z}^d}{a_k e^{ i k \cdot x}} \in H^1(\mathbb{T}^d)$$
and note that $a_0 = 0$ because $f$ has mean value 0. We have
\begin{align*}  \| \left\langle \nabla f, \alpha \right\rangle\|^2_{L^2(\mathbb{T}^d)} &=
 (2\pi)^d\sum_{k \in \mathbb{Z}^d}{|a_k|^2 | \left\langle k, \alpha \right\rangle |^2} \geq 
 c^2(2\pi)^d\sum_{k \in \mathbb{Z}^d}{\frac{|a_k|^2}{|k|^{2d-2}}}.
\end{align*}
It is easy to see that
$$ \sum_{|k| \geq 2\frac{\|\nabla f\|_{L^2(\mathbb{T}^d)}}{\| f\|_{L^2(\mathbb{T}^d)}}}{|a_k|^2} \leq \frac{\|f\|^2_{L^2(\mathbb{T}^d)}}{2}$$
because the opposite inequality would imply that
\begin{align*}
  \| \nabla f\|^2_{L^2(\mathbb{T}^d)} &= \sum_{k \in \mathbb{Z}^d}{|k|^2|a_k|^2} \geq  \sum_{|k| \geq 2\frac{\|\nabla f\|_{L^2(\mathbb{T}^d)}}{\| f\|_{L^2(\mathbb{T}^d)}}}{|k|^2|a_k|^2} \\
&\geq  4\frac{\|\nabla f\|^2_{L^2(\mathbb{T}^d)^2}}{\| f\|^2_{L^2(\mathbb{T}^d)^2}}\sum_{|k| \geq 2\frac{\|\nabla f\|_{L^2(\mathbb{T}^d)^2}}{\| f\|_{L^2(\mathbb{T}^d)^2}}}{|a_k|^2} 
\geq 2  \| \nabla f\|^2_{L^2(\mathbb{T}^d)},
\end{align*}
which is absurd. Altogether, we now have 
\begin{align*} 
 \| \left\langle \nabla f, \alpha \right\rangle\|^2_{L^2(\mathbb{T}^d)} \geq c^2 (2\pi)^d \sum_{k \in \mathbb{Z}^d}{\frac{|a_k|^2}{|k|^{2d-2}}} &\geq c^2 (2\pi)^d\sum_{k \in \mathbb{Z}^d \atop |k| \leq 2\frac{\|\nabla f\|_{L^2(\mathbb{T}^d)}}{\| f\|_{L^2(\mathbb{T}^d)}}}{\frac{|a_k|^2}{|k|^{2d-2}}} \\
&\geq \frac{c^2 (2\pi)^d}{2^{2d-2}} \frac{\| f\|^{2d-2}_{L^2(\mathbb{T}^d)}}{\| \nabla f\|^{2d-2}_{L^2(\mathbb{T}^d)}}  \sum_{|k| \leq 2\frac{\|\nabla f\|_{L^2(\mathbb{T}^d)}}{\| f\|_{L^2(\mathbb{T}^d)}}}{|a_k|^2} \\
&\geq \frac{c^2(2\pi)^d}{2^{2d-2}} \frac{\| f\|^{2d-2}_{L^2(\mathbb{T}^d)}}{\| \nabla f\|^{2d-2}_{L^2(\mathbb{T}^d)}}  \frac{\|f\|^2_{L^2(\mathbb{T}^d)}}{2}.
\end{align*}
Rearranging gives the result.
\end{proof}
We remark that a classical insight of Liouville allows to give a completely self-contained proof in the most elementary case. If we pick $\alpha = (\sqrt{2},1)$,
then the only information required to make the above argument work is the existence of a $c > 0$ such that
$$ | \left\langle k, \alpha \right\rangle| = |k_1\sqrt{2} + k_2| \geq \frac{c}{|k|} \qquad \mbox{for all}~k \in \mathbb{Z}^2.$$
However, this follows at once with $c=1/3$ from 
$$ 1 \leq |2 k_1^2 - k_2^2| = |(\sqrt{2}k_1 - k_2)(\sqrt{2}k_1 + k_2)| \leq 3|k||(\sqrt{2}k_1 + k_2)|.$$
A similar argument works for any $(1, \alpha)$ with $\alpha$ algebraic over $\mathbb{Q}$ (Liouville's theorem). More generally, a classical
characterization of badly approximable numbers in one dimension as those numbers with a bounded continued fraction expansion implies that our
proof works for
$$ \alpha = (1, \beta) \in \mathbb{R}^2$$
if $\beta$ has a bounded continued fraction expansion. A theorem of Lagrange (see e.g. \cite{kin}) implies that this is always the case if $\beta$ is a quadratic irrational.
Moreover, this characterization is sharp on $\mathbb{T}^2$: if $\beta$ has an unbounded continued fraction expansion, then the inequality is not true for $(1, \beta)$ and the sequence
$$ f_n(x) = e^{2 \pi i k_n \cdot x}$$
with $k_n = $(numerator, -denominator) of rational approximations of $\beta$ coming from the continued fraction expansion will serve as a counterexample. A very interesting special case
is Euler's continued fraction formula for $e$ (see, e.g. \cite{io}), which implies that $e$ has an unbounded continued fraction expansion and that the inequality with
$\alpha = (1,e)$ fails on $\mathbb{T}^2$.

\subsection{Proof of the Proposition.}
\begin{proof} The direction $\alpha$ has to have both entries different from 0. We use a classical result of Hurwitz \cite{hur} which guarantees the existence
of infinitely many $k \in \mathbb{Z}^2$ with
$$ \left| \frac{\alpha_1}{\alpha_2} - \frac{k_1}{k_2} \right| \leq \frac{1}{\sqrt{5}k_2^2}.$$
For any such $(k_1, k_2)$, this can be rewritten as
$$ |\alpha_1 k_2 - \alpha_2 k_1| \leq \frac{|\alpha_2|}{\sqrt{5} |k_2|}.$$
We now consider $f(x) = e^{2\pi i k \cdot x}$. Simple computation yields
$$  \|\nabla f\|_{L^2(\mathbb{T}^2)}  \| \left\langle \nabla f, \alpha \right\rangle\|_{L^2(\mathbb{T}^2)}    \leq \frac{1}{\sqrt{5}} \frac{|\alpha_2||k|}{|k_2|}\|f\|_{L^2(\mathbb{T}^2)}^2.$$
However, as $|k| \rightarrow \infty$, we have that $k_1/k_2 \rightarrow \alpha_1/\alpha_2$ and thus
$$ \frac{|\alpha_2||k|}{|k_2|} = |\alpha_2| \sqrt{\frac{k_1^2}{k_2^2}+1} \longrightarrow  |\alpha_2| \sqrt{\frac{\alpha_1^2}{\alpha_2^2}+1} = |\alpha|.$$
\end{proof}
The constant in the result of Hurwitz is sharp for the golden ratio $\alpha = \alpha_1/\alpha_2 = (1+\sqrt{5})/2 = \phi$. Moreover, it is known (see e.g. \cite{cassels}) that for every 
$$\alpha \in \mathbb{R} \setminus \mathbb{Q} \quad \mbox{which is not of the form} \quad \frac{a \phi + b}{c \phi + d} \qquad a,b,c,d \in \mathbb{Z} \quad |ad-bc|=1,$$
the constant $\sqrt{5}$ could be replaced by $\sqrt{8}$. Our example showing the sharpness of the Proposition using Fibonacci numbers was therefore, in some sense, best possible.

\subsection{Fractional derivatives.} As is obvious from the proof, fine properties of the derivative did not play a prominent role, indeed, the proof really only requires an understanding
of how fast the induced Fourier multiplier grows. This allows for various immediate generalizations. We introduce pseudodifferential operators $P(D)$ on $H^s(\mathbb{T}^d)$
via
$$ P(D) \sum_{k \in \mathbb{Z}^d}{a_k e^{i k \cdot x}} :=  \sum_{k \in \mathbb{Z}^d}{a_k P(k) e^{i k \cdot x}} $$
the same proof can immediately be applied as long as $|P(k)| \rightarrow \infty$ if $|k| \rightarrow \infty$. One example on $\mathbb{T}^2$ would be that for $\alpha \in \mathcal{B}$ and all $s>0$
$$ \|\nabla^s f\|^{\frac{1}{s}}_{L^2(\mathbb{T}^{2})} \| \left\langle \nabla f, \alpha_i \right\rangle\|_{L^2(\mathbb{T}^2)} \geq c_{\alpha}\|f\|^{1+\frac{1}{s}}_{L^2(\mathbb{T}^2)}$$
which is again sharp by the same reasoning as above. The following variant was proposed by Raphy Coifman: if we define 
$$ D^s \sum_{k \in \mathbb{Z}^d}{a_k e^{i k \cdot x}} :=  \sum_{k \in \mathbb{Z}^d}{a_k k |k|^{s-1} e^{i k \cdot x}},$$
then one can always observe substantial fluctuations in the $(d-1)-$th derivative along the flow 
$$ \left\| \left\langle D^{d} f, \alpha \right\rangle \right\|_{L^2(\mathbb{T}^d)} \geq c_{\alpha}\|f\|_{L^2(\mathbb{T}^d)}.$$

\subsection{Several ergodic directions.} We can also derive a statement for more than one ergodic direction. The same heuristics as above still apply: the main difference is that incorporating a control in more than one ergodic direction poses additional restrictions and requires less global control in the sense that a proportionately smaller power of $\|\nabla f\|_{L^2(\mathbb{T}^d)}$ is necessary. 

\begin{thm} Let $1 \leq \ell \leq d-1$. Then there exists a set $\mathcal{B}_{\ell} \in (\mathbb{T}^d)^{\ell}$ such that for every $(\alpha_1, \alpha_2, \dots, \alpha_{\ell}) \in \mathcal{B_{\ell}}$ there is a $c_{\alpha} > 0$ with
$$ \|\nabla f\|^{d-1}_{L^2(\mathbb{T}^{d})} \left(\sum_{i=1}^{\ell}{ \| \left\langle \nabla f, \alpha_i \right\rangle\|_{L^2(\mathbb{T}^d)}}\right)^{\ell} \geq c_{\alpha}\|f\|^{d-1+\ell}_{L^2(\mathbb{T}^d)}$$
for all $f\in H^1(\mathbb{T}^d)$ with mean 0.
\end{thm}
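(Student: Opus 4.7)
The plan is to repeat the argument of Theorem 1, but with the single badly approximable direction replaced by a badly approximable system of $\ell$ linear forms in $d-\ell$ variables. Specifically, I would take $(\beta_1,\dots,\beta_\ell)$ to be such a badly approximable system (the existence of which is guaranteed by Perron, and a positive-measure-free version by Schmidt in Hausdorff dimension $\ell(d-\ell)$), and define
$$\alpha_i = (e_i,\beta_i)\in\RR^{\ell}\times\RR^{d-\ell}=\RR^d,$$
where $e_i$ is the $i$-th standard basis vector in $\RR^\ell$. The set $\mathcal{B}_\ell$ is then the image (modulo $\ZZ$) of all such admissible tuples.

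The first step is to show the pointwise lower bound
$$\max_{1\le i\le\ell}|\langle\alpha_i,k\rangle|\ \ge\ \frac{c}{|k|^{(d-\ell)/\ell}}\qquad\text{for all } k\in\ZZ^d\setminus\{0\},$$
by the same case split used in Theorem~1: if $(k_{\ell+1},\dots,k_d)=0$ then some $k_i$ ($1\le i\le \ell$) is nonzero so the max is at least~$1$, while otherwise $\langle\alpha_i,k\rangle = k_i + L_i(k_{\ell+1},\dots,k_d)$ and one invokes the badly approximable bound on $\max_i\|L_i(k_{\ell+1},\dots,k_d)\|$.

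Next, the Fourier expansion gives
$$\sum_{i=1}^{\ell}\|\langle\nabla f,\alpha_i\rangle\|_{L^2}^{2} = (2\pi)^d\sum_{k\in\ZZ^d}|a_k|^2\sum_{i=1}^\ell|\langle\alpha_i,k\rangle|^2\ \ge\ c^2(2\pi)^d\sum_{k\in\ZZ^d}\frac{|a_k|^2}{|k|^{2(d-\ell)/\ell}}.$$
I would then apply the same low-frequency truncation as in Theorem~1: restrict to $|k|\le 2\|\nabla f\|_{L^2}/\|f\|_{L^2}$ (where at least half of the $L^2$-mass lives) to deduce
$$\sum_{i=1}^{\ell}\|\langle\nabla f,\alpha_i\rangle\|_{L^2}^{2}\ \gtrsim\ \frac{\|f\|_{L^2}^{\,2+2(d-\ell)/\ell}}{\|\nabla f\|_{L^2}^{\,2(d-\ell)/\ell}}.$$
Raising to the $\ell/2$-th power and using the elementary inequality $(\sum x_i)^\ell\ge(\sum x_i^2)^{\ell/2}$ for nonnegative $x_i$ yields
$$\|\nabla f\|_{L^2}^{\,d-\ell}\Bigl(\sum_{i=1}^{\ell}\|\langle\nabla f,\alpha_i\rangle\|_{L^2}\Bigr)^{\ell}\ \gtrsim\ \|f\|_{L^2}^{\,d}.$$
To reach the exponent $d-1$ stated in the theorem, I would finally multiply both sides by $\|\nabla f\|_{L^2}^{\,\ell-1}\ge\|f\|_{L^2}^{\,\ell-1}$ (the ordinary Poincar\'{e} inequality).

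The main obstacle is really bookkeeping rather than substance: one must correctly embed a badly approximable system of $\ell$ forms in $d-\ell$ variables into the $\ell$ vectors $\alpha_i\in\RR^d$ so that the decoupling of the ``first $\ell$ coordinates'' from the ``last $d-\ell$ coordinates'' produces the exponent $(d-\ell)/\ell$ and nothing worse. Everything else — the Fourier identity, the high-frequency cutoff, the $\ell^2$-to-$\ell^1$ comparison, and the final cosmetic upgrade using the classical Poincar\'{e} inequality — is parallel to Theorem~1.
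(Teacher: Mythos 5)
Your proof is correct, and its analytic skeleton --- the Fourier multiplier lower bound, the truncation at $|k|\le 2\|\nabla f\|_{L^2}/\|f\|_{L^2}$ where half the mass lives, and the comparison of the $\ell^1$ and $\ell^2$ sums over $i$ --- is exactly the paper's. Where you genuinely diverge is in the construction of the directions. The paper takes a badly approximable system of $\ell$ forms in $d-1$ variables and sets $\alpha_i=(1,\beta_i)$, so that each $|\langle\alpha_i,k\rangle|$ is bounded below by the distance of $L_i(k_2,\dots,k_d)$ to the nearest integer; this gives the exponent $(d-1)/\ell$ and lands directly on the stated inequality with $\|\nabla f\|^{d-1}$. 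You instead take $\ell$ forms in $d-\ell$ variables and put an identity block in the first $\ell$ coordinates, $\alpha_i=(e_i,\beta_i)$; your case split still works (if the last $d-\ell$ coordinates of $k$ vanish, then $\max_i|\langle\alpha_i,k\rangle|=\max_i|k_i|\ge 1$), and you obtain the better exponent $(d-\ell)/\ell$, hence the stronger intermediate inequality
$$\|\nabla f\|_{L^2(\mathbb{T}^d)}^{d-\ell}\Bigl(\sum_{i=1}^{\ell}\|\langle\nabla f,\alpha_i\rangle\|_{L^2(\mathbb{T}^d)}\Bigr)^{\ell}\ \ge\ c\,\|f\|_{L^2(\mathbb{T}^d)}^{d},$$
which you then downgrade to the stated form via the classical Poincar\'e inequality. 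That stronger form is precisely the improvement the paper only sketches in its later subsection on Hausdorff dimension, where it is derived from systems badly approximable in absolute value (cf.\ \cite{huss2}); your identity-block construction is in effect an explicit realization of such a system from an ordinary nearest-integer badly approximable system in $d-\ell$ variables. So you prove slightly more than the theorem asks, at the cost of one extra cosmetic step, while the paper's choice $\alpha_i=(1,\beta_i)$ reaches the stated exponents without it. Both arguments are complete.
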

\begin{proof} We consider $\ell \leq d-1$ 
vectors $\beta_1, \beta_2, \dots, \beta_{\ell}$ from $\mathbb{T}^{d-1}$ with $\ell$ associated linear forms $L_i:\mathbb{Z}^{d-1} \rightarrow \mathbb{R}$
via $L_i = \left\langle \beta_i, \textbf{x} \right\rangle$ such that they form a system of badly approximable linear forms and set
\begin{align*}
\alpha_1 &= (1, \beta_1) \\
\dots& \\
\alpha_{\ell} &= (1, \beta_{\ell})
\end{align*}
The same reasoning as before (distinguishing between $k$ vanishing outside of the first component or not) implies again for every single $1 \leq i \leq \ell$
$$ |\left\langle \alpha_i, k \right\rangle | =  |k_1 + L_i((k_2, \dots, k_d))| \geq  || L_i((k_2, \dots, k_d)) ||$$
from which we derive that whenever $k$ is not concentrated on the first component
$$ \sum_{i=1}^{\ell}{  |\left\langle \alpha_i, k \right\rangle | } \geq   \max(\| L_1(k) \|, \dots, \| L_{\ell}(k) \|)  \geq  c\left( \max(|k_2|, \dots, |k_d|) \right)^{-\frac{d-1}{\ell}}.$$
If $k$ is concentrated on the first component, we get a bound of $\ell$, which is much larger.
For
$$ f = \sum_{k \in \mathbb{Z}^d \atop k \neq 0}{a_k e^{i k \cdot x}}$$
a simple computation shows that
\begin{align*} \sum_{i=1}^{\ell}{ \| \left\langle \nabla f, \alpha_i \right\rangle\|^2_{L^2(\mathbb{T}^d)}} &=
 (2\pi)^d\sum_{k \in \mathbb{Z}^d}{|a_k|^2 \left(\sum_{i=1}^{\ell}{ | \left\langle k, \alpha_i \right\rangle |^2}\right)} \\
&\geq_{}
 c^2\sum_{k \in \mathbb{Z}^d}{|a_k|^2  \left( \max(|k_1|, \dots, |k_d|) \right)^{-\frac{2(d-1)}{\ell}}} \\
&\geq c^2  \sum_{k \in \mathbb{Z}^d}{|a_k|^2 |k|^{- \frac{2(d-1)}{\ell}}}.
\end{align*}
The rest of the argument proceeds as before; finally, we recall that any two norms in finite-dimensional vector spaces are equivalent and thus, up to some absolute constants depending only on $\ell$,
 $$ \left(\sum_{i=1}^{\ell}{ \| \left\langle \nabla f, \alpha_i \right\rangle\|_{L^2(\mathbb{T}^d)}}\right)^{\ell} \sim_{\ell} \left(\sum_{i=1}^{\ell}{ \| \left\langle \nabla f, \alpha_i \right\rangle\|^2_{L^2(\mathbb{T}^d)}}\right)^{\frac{\ell}{2}}$$
and the result follows.
\end{proof}

\subsection{The Hausdorff dimension.} As is obvious from the proof, we are not so much interest in the distance to the lattice but care more about the distance to the origin. It seems that
there is ongoing research in that direction \cite{dick, huss2, huss}, which is concerned with establishing bounds on the dimension of the set
$$ \max(| L_1(\textbf{x}) |, \dots, | L_{\ell}(\textbf{x}) |)  \geq  c\left( \max(|x_1|, \dots, |x_d|) \right)^{-\frac{d}{\ell}+1},$$
where $|\cdot|$ is the absolute value on $\mathbb{R}$. For any such system of linear forms given by $\alpha_1, \dots, \alpha_{\ell}$ satisfying that inequality, we can improve Theorem 2 with the same proof to
$$ \|\nabla f\|^{d-\ell}_{L^2(\mathbb{T}^{d})} \left(\sum_{i=1}^{\ell}{ \| \left\langle \nabla f, \alpha_i \right\rangle\|_{L^2(\mathbb{T}^d)}}\right)^{\ell} \geq c_{\alpha}\|f\|^{d}_{L^2(\mathbb{T}^d)}$$
for all $f\in H^1(\mathbb{T}^d)$ with mean 0. We also remark the following simple proposition (the essence of which is contained in \cite{huss2}).
\begin{proposition} Let $d \geq 2$. $\alpha \in \mathbb{R}^d$ is admissible in Theorem 1 if and only if there exists $\lambda \in \mathbb{R}$ such that
$$ \alpha = \left(\lambda, \lambda\beta\right)$$ 
and $\beta$ is badly approximable linear form in $\mathbb{R}^{d-1}$. 
\end{proposition}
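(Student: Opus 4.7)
The plan is to first reduce admissibility for Theorem~1 to a clean pointwise lower bound on the Fourier multiplier $\langle k, \alpha\rangle$, and then to translate that bound into the definition of a badly approximable linear form on $\mathbb{Z}^{d-1}$.

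\textbf{Step 1: Equivalence with a multiplier bound.} I would first argue that $\alpha$ is admissible in Theorem~1 if and only if
$$ \inf_{k \in \mathbb{Z}^d \setminus\{0\}} |k|^{d-1}\, |\langle k, \alpha\rangle| > 0.$$
Sufficiency is what the proof of Theorem~1 already establishes, since that proof uses exactly this multiplier bound as its only input. For necessity, I would test the directional Poincar\'e inequality against the characters $f(x) = e^{i k\cdot x}$, for which $\|f\|_{L^2}$, $\|\nabla f\|_{L^2}$ and $\|\langle \nabla f, \alpha\rangle\|_{L^2}$ are directly computable, and the inequality collapses to the displayed lower bound for the single frequency $k$.

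\textbf{Step 2: No coordinate of $\alpha$ can vanish.} From Step~1, taking $k = e_i$ immediately forces $\alpha_i \neq 0$ for every $i$. In particular $\lambda := \alpha_1 \neq 0$, so I may write $\alpha = (\lambda, \lambda\beta)$ with $\beta = (\alpha_2/\alpha_1, \ldots, \alpha_d/\alpha_1) \in \mathbb{R}^{d-1}$ (any other nonzero coordinate could play the role of $\lambda$ after a harmless permutation).

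\textbf{Step 3: Reduction to a badly approximable form.} Write $k = (k_1, k')$ with $k' \in \mathbb{Z}^{d-1}$ and factor $|\langle k, \alpha\rangle| = |\lambda|\,|k_1 + \langle k', \beta\rangle|$. For $k' = 0$ the multiplier bound reads $|\lambda||k_1|^d \geq |\lambda|$, which is automatic. For $k' \neq 0$, let $m \in \mathbb{Z}$ be the nearest integer to $-\langle k', \beta\rangle$: then $k_1 = m$ realizes $|\lambda|\,\|\langle k', \beta\rangle\|$, while every other $k_1$ gives at least $|\lambda|/2$. The latter case yields a uniform bound automatically, so the Step~1 condition reduces to
$$ |k|^{d-1}\, \|\langle k', \beta\rangle\| \geq c \qquad \text{for every } k' \in \mathbb{Z}^{d-1}\setminus\{0\}, \ k = (m, k').$$
Since $|m| \leq |\langle k', \beta\rangle| + \tfrac{1}{2} \leq (|\beta| + \tfrac{1}{2}) |k'|$, the Euclidean norms $|k|$ and $|k'|$ are comparable with constants depending only on $\beta$. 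The displayed condition is therefore equivalent to $|k'|^{d-1}\|\langle k', \beta\rangle\| \geq c'$ for all nonzero $k'$, which is precisely the definition of $\beta$ being a badly approximable linear form in $\mathbb{R}^{d-1}$ (the case $\ell = 1$ of the Dirichlet exponent from the paper's Subsection~2.4).

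The main obstacle is little more than careful case-splitting: verifying that the comparison $|k| \asymp |k'|$ survives the $(d-1)$-th power with constants independent of $k$, and checking that the suboptimal choices $k_1 \neq m$ really contribute nothing beyond a universal constant. No genuinely new idea is needed beyond the proof of Theorem~1 and the definition of a badly approximable linear form; the proposition is a repackaging of Step~1, the observation that a direction with a zero coordinate cannot see oscillations in that coordinate, and a one-line reparametrization.
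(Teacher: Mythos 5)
Your proof is correct and follows exactly the route the paper intends: the paper offers no written proof beyond the remark that ``the argument is indirectly contained in the earlier proofs,'' and your Steps 1--3 (reduction to the multiplier bound $\inf_{k\neq 0}|k|^{d-1}|\langle k,\alpha\rangle|>0$ by testing on characters, exclusion of vanishing coordinates, and the nearest-integer reduction to $\|\langle k',\beta\rangle\|$ with $|k|\asymp|k'|$) are precisely the details the author leaves implicit from the proof of Theorem 1. No gaps.
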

Using the result of Schmidt \cite{schm}, we see that the Hausdorff dimension of the set of badly approximable vectors in $\mathbb{R}^{d-1}$ is $d-1$ and the construction increases the dimension by 1.
The Hausdorff dimension of the set of admissible vectors in Theorem 1 is therefore $d$. The argument is indirectly contained in the earlier proofs.

\subsection{Variants.} This subsection is concerned with inequalities of the type
$$ \|\nabla f\|_{L^2(\mathbb{T}^2)}^{1-\delta} \| \left\langle \nabla f, \alpha \right\rangle f \|^{\delta}_{L^2(\mathbb{T}^2)} \geq c\|f\|_{L^2(\mathbb{T}^2)}^{}$$
for some $0 < \delta \leq 1/2$. The case $\delta =1/2$ was discussed above and following the same arguments immediately imply that $\delta > 1/2$ is impossible. However, the threshold $\delta = 1/2$ is also sharp.
\begin{theorem}[Khintchine] For every $\delta < 1/2$, the set of $\alpha \in \mathbb{T}^2$ for which there exists a $c_{\alpha} > 0$ such that
$$ \|\nabla f\|_{L^2(\mathbb{T}^2)}^{1-\delta} \| \left\langle \nabla f, \alpha \right\rangle f \|^{\delta}_{L^2(\mathbb{T}^2)} \geq c\|f\|_{L^2(\mathbb{T}^2)}^{}$$
holds for all $f \in L^2(\mathbb{T}^2)$ with mean 0 has full measure.
\end{theorem}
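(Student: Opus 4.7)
The plan is to imitate the Fourier-analytic proof of Theorem~1, replacing the badly-approximable hypothesis on $\alpha$ by the weaker, Khintchine-almost-sure diophantine bound that becomes available once we only ask for polynomial decay with exponent $s := (1-\delta)/\delta > 1$. The condition $\delta < 1/2$ is exactly $s > 1$, so in particular $\sum_{q \geq 1} q^{-s} < \infty$.

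First I would reduce the inequality to the pointwise Fourier-side statement
$$|\langle k,\alpha\rangle|\ \geq\ c_\alpha\,|k|^{-s} \qquad \text{for all } k \in \mathbb{Z}^2\setminus\{0\}.$$
Given this, the frequency-truncation trick used in the proof of Theorem~1 (restricting $\sum_k |a_k|^2|\langle k,\alpha\rangle|^2$ to $|k|\leq 2\|\nabla f\|_{L^2}/\|f\|_{L^2}$, which carries at least half of the $L^2$-mass) produces
$$\|\langle\nabla f,\alpha\rangle\|_{L^2(\mathbb{T}^2)}^{2}\ \gtrsim_\alpha\ \frac{\|f\|_{L^2(\mathbb{T}^2)}^{2+2s}}{\|\nabla f\|_{L^2(\mathbb{T}^2)}^{2s}},$$
and raising to the power $\delta/2$ and rearranging (using $\delta s = 1-\delta$ and $\delta(1+s) = 1$) gives the claimed inequality with exponent $\delta$ on $\|\langle\nabla f,\alpha\rangle\|_{L^2}$.

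Second I would establish the Fourier-side bound for almost every $\alpha \in \mathbb{T}^2$ via the classical one-dimensional Khintchine theorem. Set $\beta = \alpha_1/\alpha_2$; for each fixed $k_1 \in \mathbb{Z}$ the minimum of $|\langle k,\alpha\rangle| = |\alpha_2|\,|k_2 + k_1\beta|$ over $k_2 \in \mathbb{Z}$ equals $|\alpha_2|\,\|k_1\beta\|$ and is attained at the nearest integer to $-k_1\beta$. For every off-minimum lattice point one has $|\langle k,\alpha\rangle| \geq |\alpha_2|/2$, which trivially dominates $c_\alpha|k|^{-s}$, and for the critical points the estimate $|k| \leq |k_1|\sqrt{1+\beta^{2}} + 1$ reduces the desired bound to $\|q\beta\| \geq c_\beta\,q^{-s}$ for every $q \geq 1$. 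Since $s > 1$, the convergence case of Khintchine's theorem together with Borel--Cantelli secures precisely this inequality for almost every $\beta \in \mathbb{R}$, and a routine Fubini argument in $(\alpha_1,\alpha_2)$ transfers the full-measure statement on $\beta$ to full measure on $\alpha \in \mathbb{T}^2$.

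There is no serious obstacle to this strategy; the only conceptual point worth isolating is that Khintchine's theorem sits precisely at the critical exponent $s = 1$, which is what allows every $s > 1$, equivalently every $\delta < 1/2$, to be admissible for almost every $\alpha$ while simultaneously ruling out the endpoint $\delta = 1/2$, in agreement with the sharpness established for Theorem~1.
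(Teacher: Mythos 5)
Your proposal is correct and follows exactly the route the paper intends (the paper leaves this theorem as an immediate consequence of the argument for Theorem~1): the convergence case of Khintchine/Borel--Cantelli gives $\|q\beta\|\geq c_{\beta}q^{-s}$ for a.e.\ $\beta$ whenever $s=(1-\delta)/\delta>1$, this yields the multiplier bound $|\left\langle k,\alpha\right\rangle|\geq c_{\alpha}|k|^{-s}$, and the frequency-truncation step then produces the stated exponents. Your exponent bookkeeping ($\delta s=1-\delta$, $\delta(s+1)=1$) and the Fubini transfer from $\beta$ to $\alpha$ are both sound, so this is a complete writeup of the paper's (implicit) proof.
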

Another celebrated result in Diophantine approximation is the Thue-Siegel-Roth theorem stating that for every irrational algebraic number $\alpha$ and every $\varepsilon > 0$ we have 
$$ \left| \alpha - \frac{p}{q} \right| \geq \frac{c_{\alpha}}{q^{2+\varepsilon}}$$
for some $c_{\alpha} > 0$. This immediately implies, along the same lines as above, that for every vector of the form $\alpha = (1, \beta)$ with $\beta$ being an irrational algebraic number and every
$\varepsilon > 0$
$$ \|\nabla f\|_{L^2(\mathbb{T}^2)}^{1/2 + \varepsilon} \| \left\langle \nabla f, \alpha \right\rangle f \|^{1/2 - \varepsilon}_{L^2(\mathbb{T}^2)} \geq c_{\alpha, \varepsilon}\|f\|_{L^2(\mathbb{T}^2)}^{}.$$
Recall that the inequality does \textit{probably} not hold for $\alpha = (1,\pi)$ because $\pi$ is \textit{probably} not badly
approximable. However, there are weaker positive results. A result of Salikhov \cite{sali} implies the existence of a $c_{} > 0$ such that
$$ \left| \pi - \frac{p}{q} \right| \geq \frac{c_{}}{q^{8}}.$$
Repeating again the same argument as above, we can use this to derive
$$ \|\nabla f\|_{L^2(\mathbb{T}^2)}^{7/8} \| \left\langle \nabla f, (1,\pi) \right\rangle  \|^{1/8}_{L^2(\mathbb{T}^2)} \geq c\|f\|_{L^2(\mathbb{T}^2)}^{}.$$
Similarly, a result of Marcovecchio \cite{marc} shows
$$ \|\nabla f\|_{L^2(\mathbb{T}^2)}^{13/18} \| \left\langle \nabla f, (1,\log{2}) \right\rangle  \|^{5/18}_{L^2(\mathbb{T}^2)} \geq c\|f\|_{L^2(\mathbb{T}^2)}^{}$$
and similar results are available for other numbers (i.e. $\pi^2, \log{3}, \zeta(3), \dots$).
\medskip

\section{Uniform Poincar\'{e} inequalities via diffusion}
There exists a closely related problem, where we can try to substitute a directional derivative by a random process (that points roughly in the direction of its mean but may also behave differently). Let $\mathbb{T} = \mathbb{R} \setminus \mathbb{Z}$ be the one-dimensional torus $\mathbb{T}$, let $f \in L^p(\mathbb{T})$
and consider the averaging operator $A_t: L^p(\mathbb{T}) \rightarrow L^p(\mathbb{T})$ given by
$$ (A_tf)(x) = \frac{1}{2t}\int_{-t}^{+t}{f(x+y) dy}.$$
An iterative application of $A_t$ corresponds to a time-discrete diffusion.
These objects have strong and natural ties to Poincar\'{e} inequalities (see, for example, the recent book by Bakry, Gentil and Ledoux \cite{led}).
One natural question is whether $\|f - A_t f\|_{L^p(\mathbb{T})}$ can be small if $f$ is not constant. 
Motivated by a problem of Schechtman, this was recently investigated by Nayar \& Tkocz \cite{nay}. They studied the problem in the general
case of convolution with a $\mathbb{T}-$valued random variable $Y$ which for some $\ell \in \mathbb{N}$ 
$$ Y_1 + Y_2 + \dots + Y_{\ell} \quad \mbox{ has a nontrivial absolutely continuous part.}$$

\begin{thm}[Nayar \& Tkocz] There exists a constant $c > 0$ depending only on $Y$ such that
for all $f \in L^p(\mathbb{T})$, $1 \leq p \leq \infty$, with $\int_{\mathbb{T}}{f} = 0$
$$ \| f(x) - \mathbb{E}f(x + tY) \|_{L^p(\mathbb{T})}  \geq c t^2 \| f \|_{L^p(\mathbb{T})}.$$
\end{thm}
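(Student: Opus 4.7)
The plan is to combine an iteration trick with a strict contraction estimate for a sufficiently high power of the convolution operator $T_t f(x) := \mathbb{E} f(x + tY)$. Since $T_t$ is a Markov operator, $\|T_t\|_{L^p(\mathbb{T}) \to L^p(\mathbb{T})} \le 1$ for every $1 \le p \le \infty$, and the algebraic identity
$$I - T_t^N \;=\; (I - T_t)\bigl(I + T_t + T_t^2 + \cdots + T_t^{N-1}\bigr)$$
yields the upper bound $\|f - T_t^N f\|_{L^p(\mathbb{T})} \le N \|f - T_t f\|_{L^p(\mathbb{T})}$ for every integer $N \ge 1$. I would therefore choose $N = \lceil C_Y / t^2 \rceil$ for a suitable constant $C_Y$ depending only on the law of $Y$, and reduce the theorem to a lower bound
$$\|f - T_t^N f\|_{L^p(\mathbb{T})} \;\ge\; \eta_Y\,\|f\|_{L^p(\mathbb{T})} \qquad \text{for every mean-zero } f,$$
with $\eta_Y>0$ depending only on $Y$; this immediately yields the theorem with constant $c = \eta_Y/(2C_Y)$.

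The contraction estimate in turn would follow from a Doeblin-type decomposition of the convolution kernel. Let $\nu_N$ denote the law on $\mathbb{T}$ of $t(Y_1 + \cdots + Y_N)\bmod 1$, with $Y_i$ i.i.d.\ copies of $Y$. If $\nu_N$ possesses an absolutely continuous component whose density is bounded below by some $\eta_Y > 0$ uniformly on $\mathbb{T}$, then one may write
$$\nu_N \;=\; \eta_Y\,dx \;+\; (1 - \eta_Y)\,\mu$$
for some probability measure $\mu$, where $dx$ denotes Lebesgue measure on $\mathbb{T}$. Convolution against Lebesgue measure annihilates any mean-zero function, so $T_t^N f = (1 - \eta_Y)(f \ast \mu)$ for mean-zero $f$, and Young's inequality gives $\|T_t^N f\|_{L^p(\mathbb{T})} \le (1 - \eta_Y)\|f\|_{L^p(\mathbb{T})}$. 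The reverse triangle inequality then produces the required bound $\|f - T_t^N f\|_{L^p(\mathbb{T})} \ge \eta_Y\|f\|_{L^p(\mathbb{T})}$.

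The principal obstacle is the pointwise density lower bound for $\nu_N$, and this is exactly where the hypothesis on $Y_1 + \cdots + Y_\ell$ enters in an essential way. Once an absolutely continuous component appears after $\ell$ convolutions, the density of $Z_N := Y_1 + \cdots + Y_N$ is smoothed with every additional convolution, and a Gnedenko-type local central limit theorem provides a Gaussian approximation of this density with vanishing relative error on compact sets. Because $tZ_N$ has variance of order $Nt^2\,\mathrm{Var}(Y)$, the choice $N = \lceil C_Y / t^2\rceil$ with $C_Y$ large makes $tZ_N$ approximately Gaussian on a scale comparable to the torus period, and a Poisson-summation / theta-function computation then shows that wrapping such a Gaussian modulo one yields a density bounded above \emph{and} below by positive absolute constants, producing the required $\eta_Y$. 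The hypothesis cannot be dispensed with: for $Y$ supported on a lattice, $\nu_N$ would remain concentrated on a finite arithmetic progression in $\mathbb{T}$ and no uniform density lower bound could hold, invalidating the Doeblin decomposition.
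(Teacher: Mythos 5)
Your proposal is correct and follows essentially the same route as the paper: the factorization $I-T_t^N=(I-T_t)(I+\cdots+T_t^{N-1})$ is just the paper's telescoping-plus-Young's-inequality step, and your Doeblin decomposition $\nu_N=\eta_Y\,dx+(1-\eta_Y)\mu$ with annihilation of mean-zero functions is exactly the content of the cited Lemma 2 of Nayar--Tkocz, applied after $N\sim t^{-2}$ convolutions flatten the kernel via a local limit theorem. The only caveat, shared with the paper's own sketch, is that $Z_N$ generally retains a singular part, so the local CLT must be applied to the absolutely continuous component (whose mass tends to $1$) rather than to a genuine density of $Z_N$.
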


We believe the inequality to be of quite some interest; the example above corresponds to $Y$ being uniformly distributed on $\mathbb{T}$ but the setting is clearly more general than that.
It is easy to see that the inequality is sharp:
for a fixed $C^2(\mathbb{T})-$function $f$ and $Y$ being again the uniform distribution it follows from a Taylor expansion that
$$ \lim_{t \rightarrow 0}{\frac{\|f - A_t f\|_{L^p}}{t^2}} = \frac{1}{6}\|f''\|_{L^p}.$$
Using $C^{2}(\mathbb{T}) \hookrightarrow L^p(\mathbb{T})$, the statement immediately implies a Poincar\'{e} inequality
$$ \|f''\|_{L^p(\mathbb{T})} \geq c\|f\|_{L^p(\mathbb{T})}$$
with a constant $c > 0$ that does not depend on $p$.
There is a natural variant: note that for any $f \in C^1$ the case $\mathbb{E}Y \neq 0$ yields a first order contribution
$$ \lim_{t \rightarrow 0}{\frac{\mathbb{E}(f(x + t Y)) - f(x)}{t} } = (\mathbb{E}Y)f'(x).$$
This suggests that the cases $\mathbb{E}(Y) = 0$ and $\mathbb{E}(Y) \neq 0$ behave differently: from
the point of view of diffusion, the case $\mathbb{E}Y \neq 0$ turns the averaging operator $\mathbb{E}f(x+tY)$
into an operator with drift, which allows for a faster exploration of the torus. We prove that this is indeed the case.
 
\begin{thm} \label{mu} Assume $\mathbb{E}Y \neq 0$. Then there exists a constant $c > 0$ depending only on $Y$ such that
for all $f \in L^p(\mathbb{T})$, $1 \leq p \leq \infty$ with $\int_{\mathbb{T}}{f} = 0$
$$ \| f(x) - \mathbb{E}f(x + tY) \|_{L^p(\mathbb{T})}\geq c t\| f \|_{L^p(\mathbb{T})}.$$
\end{thm}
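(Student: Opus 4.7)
My plan rests on an integration-by-parts identity that converts $(I - T_t)f$ into $t$ times a spatial derivative of an averaged function, after which the classical Poincar\'{e} inequality on $\mathbb{T}$ closes a first estimate. Without loss of generality $\mu := \mathbb{E}Y > 0$; lift $Y$ to take values in $[0,1)$. Writing $f(x+tY) - f(x) = \int_0^{tY} f'(x+s)\,ds$ and applying Fubini gives
\[
(I - T_t) f(x) \;=\; -t\int_0^1 \mathbb{P}(Y > u)\,f'(x+tu)\,du \;=\; -t\mu\,(\tilde T_t f)'(x),
\]
where $\tilde T_t f(x) := \int_0^1 h(u)\,f(x+tu)\,du$ is the averaging operator attached to the probability density $h(u) := \mathbb{P}(Y>u)/\mu$ on $[0,1]$. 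In particular $\|(I - T_t) f\|_{L^p(\mathbb{T})} = t\mu \, \|(\tilde T_t f)'\|_{L^p(\mathbb{T})}$.

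Since $\tilde T_t f$ has mean zero (Fubini again), the Poincar\'{e} inequality on $\mathbb{T}$---which holds with constant $1$ uniformly in $p\in[1,\infty]$ via the chain $|F(x)|\leq \|F'\|_{L^1(\mathbb{T})}\leq \|F'\|_{L^p(\mathbb{T})}$ for mean-zero $F$---yields $\|(\tilde T_t f)'\|_{L^p} \geq \|\tilde T_t f\|_{L^p}$, and hence
\[
\|(I - T_t) f\|_{L^p(\mathbb{T})} \;\geq\; t\mu \, \|\tilde T_t f\|_{L^p(\mathbb{T})}.
\]

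The remaining step---passing from $\|\tilde T_t f\|_{L^p}$ to $\|f\|_{L^p}$ uniformly in small $t$ and in $p \in [1,\infty]$---is what I expect to be the main obstacle. On the Fourier side, the identity reads $1 - \psi(kt) = -2\pi i kt\mu\,\tilde\psi(kt)$ with $\psi(s) = \mathbb{E}e^{2\pi i sY}$ and $\tilde\psi(s) = \mathbb{E}e^{2\pi i sZ}$; the Nayar--Tkocz hypothesis forces $\psi(s) \neq 1$ for $s \neq 0$ (otherwise $Y$ would sit on a finite arithmetic progression), and a one-step integration by parts in $\tilde\psi$ using $h(0) > 0$, $h(1) = 0$ gives $|\tilde\psi(s)| \gtrsim 1/|s|$ as $|s|\to\infty$, which together deliver $|1 - \psi(kt)| \geq c t$ for all $k\neq 0$ and small $t$, settling the $L^2$ version. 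To extend this to $L^p$ uniformly in $p$, I would split $f$ around the frequency scale $|k| \sim 1/t$ using a smooth (non-sharp) cutoff: on the low-frequency piece $\tilde T_t$ is essentially the identity so the Poincar\'{e} step already closes, while on the high-frequency piece $|1 - \psi(kt)|$ is bounded below by a constant and Young's inequality gives $\|(I - T_t) f_{\mathrm{high}}\|_{L^p} \gtrsim \|f_{\mathrm{high}}\|_{L^p}$ directly. The technical difficulty lies in propagating both estimates through the smooth cutoff uniformly in $p$, which should succeed because the cutoff kernel can be chosen to have $L^1$-norm bounded independently of $t$.
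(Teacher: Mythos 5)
Your opening reduction is correct and genuinely different from the paper's argument: the identity $(I-T_t)f=-t\mu\,(\widetilde T_tf)'$ combined with the $p$-uniform Poincar\'e inequality cleanly produces the factor $t$, and your Fourier-side analysis does settle the case $p=2$, since $|1-\psi(kt)|\geq ct$ for $k\neq 0$ follows from $\psi'(0)=2\pi i\mu\neq 0$ near $s=0$, and away from $s=0$ from $\psi(s)\neq 1$ for $s\neq 0$ together with $\limsup_{|s|\to\infty}|\psi(s)|<1$ (Riemann--Lebesgue applied to the absolutely continuous part of $\mu^{*\ell}$). The gap is exactly where you anticipate it, and it is not merely technical: for $1\leq p\leq\infty$, $p\neq 2$, a pointwise lower bound on the multiplier $1-\psi(kt)$ over the high frequencies does \emph{not} give a lower bound for the operator on $L^p$. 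You would need the truncated reciprocal multiplier $\chi(tk)/(1-\psi(tk))$ to be an $L^p$-bounded Fourier multiplier with norm uniform in $t$; since $\psi$ is the characteristic function of an essentially arbitrary probability measure (no smoothness, no decay of derivatives), no Mikhlin/H\"ormander-type criterion applies, and the theorem includes the endpoints $p=1,\infty$ where even smooth nonconstant multipliers are problematic. There is a secondary issue as well: after splitting $f=f_{\mathrm{low}}+f_{\mathrm{high}}$ you cannot simply take the better of the two lower bounds, because for $p\neq 2$ neither $\|f\|_{L^p}$ nor $\|(I-T_t)f\|_{L^p}$ splits along the decomposition; one must argue that one of the two pieces carries a fixed fraction of the norm and control the cross terms, uniformly in $p$ and $t$.

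The paper's route avoids Fourier multipliers entirely, and this is precisely what makes the all-$p$ statement come for free: writing $T_tf=f*\mu_t$, the triangle inequality and the fact that convolution with a probability measure contracts every $L^p$ give $\bigl\|f-f*\bigl(\frac1n\sum_{k=1}^{n}\mu_t^{k}\bigr)\bigr\|_{L^p}\leq n\|f-f*\mu_t\|_{L^p}$, and Stone's renewal theorem shows that for $n\sim 1/t$ the Ces\`aro average $\frac1n\sum_{k=1}^{n}\mu_t^{k}$ has an absolutely continuous part with density bounded below by some $c>0$, whence $\bigl\|f*\bigl(\frac1n\sum_{k=1}^{n}\mu_t^{k}\bigr)\bigr\|_{L^p}\leq(1-c)\|f\|_{L^p}$ for mean-zero $f$. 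If you want to salvage your approach, the most promising repair is to replace the high-frequency multiplier step by a physical-space contraction estimate of exactly this kind; as it stands, the proposal proves the theorem only for $p=2$.
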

It is easy to see that the statement cannot be improved: consider $Y \sim \mathcal{U}([0,1/2])$ to be
the uniform distribution on $[0,1/2]$ and let $f = \chi_{[0,1/2]} - \chi_{[1/2, 1]}$. Then $\|f\|_{L^p(\mathbb{T})} = 1$
and for all $1 \leq p < \infty$
$$ \| f(x) - \mathbb{E}f(x + tY) \|_{L^p(\mathbb{T})} \sim t.$$
By taking 
the limit $t \rightarrow 0$, we obtain
$$ \|f'\|_{L^p(\mathbb{T})} \geq c\|f\|_{L^p(\mathbb{T})}$$
with a constant $c > 0$ that does not depend on $p$. We will first prove the statements and
then comment on the relation with the material presented above.

\subsection{The proof of Nayar \& Tkocz.} We start by giving a summary of the proof given in \cite{nay}
since our argument for Theorem 1 is based on that approach. We write the averaging operator
 $$ f(x) \rightarrow \mathbb{E}f(x + tY) \qquad \mbox{as a convolution} \qquad f \rightarrow f*\mu.$$
If $\|f - f*\mu\|_{L^p(\mathbb{T})}$ is small, then $f*\mu \sim f$ and repeating
the argument suggests that 
$$\|f*\mu - f*\mu*\mu\|_{L^p(\mathbb{T})} \qquad \mbox{should be small as well.}$$
Since $\mu$ comes from a probability measure, Young's convolution inequalities implies that
$$\|f*\mu - f*\mu*\mu\|_{L^p(\mathbb{T})} = \|(f - f*\mu)*\mu\|_{L^p(\mathbb{T})} \leq  \|f - f*\mu\|_{L^p(\mathbb{T})}.$$
 The next
ingredient is the triangle inequality allowing us to compare

\begin{align*}
\| f - f*\mu\|_{L^p(\mathbb{T})} &\geq \frac{1}{n}\left(\| f - f*\mu\|_{L^p(\mathbb{T})} + \| f*\mu - f*\mu^2 \|_{L^p(\mathbb{T})} + \dots +  \| f*\mu^{n-1} - f*\mu^n \|_{L^p(\mathbb{T})} \right)   \\
&\geq \frac{1}{n} \| f - f*\mu^n\|_{L^p(\mathbb{T})},
\end{align*}
where
$$ \mu^{k} := \underbrace{\mu * \mu * \dots * \mu}_{k~\mbox{times}}$$
is the $k-$fold convolution of $\mu$ with itself.
The final ingredient of the argument is the following: if $n$ is sufficiently large, then the $n-$fold convolution $\mu^n$ is very well-behaved: its absolutely continuous
part satisfies a uniform lower bound. However, if $\mu^n \geq c$ in a pointwise sense and $f$ has mean 0, then \cite[Lemma 2]{nay}
$$ \|f*\mu\|_{L^p(\mathbb{T})} \leq (1-c)\|f\|_{L^p(\mathbb{T})}$$
and therefore
$$ \| f - f*\mu\|_{L^p(\mathbb{T})} \geq \frac{1}{n}\| f - f*\mu^n\|_{L^p(\mathbb{T})} \geq \frac{1}{n} \left( \| f \|_{L^p(\mathbb{T})} - \| f*\mu^n\|_{L^p(\mathbb{T})}\right) \geq \frac{c}{n}\| f \|_{L^p(\mathbb{T})}.$$
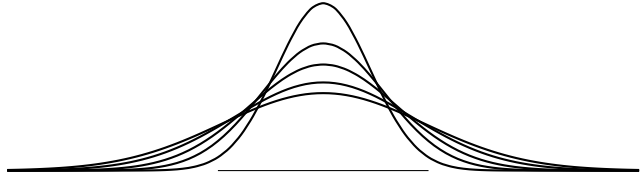
\begin{figure}[h!]
\begin{center}
\begin{tikzpicture}[scale=0.7]
  \draw[] (-2,0) -- (2,0);
  \draw[scale=2,domain=-3:3,samples=55,smooth,variable=\x,black, thick] plot ({\x},{(sqrt(4/sqrt(2.5)))*exp(-4/sqrt(2.5)*(\x)*(\x))});
  \draw[scale=2,domain=-3:3,samples=55,smooth,variable=\x,black, thick] plot ({\x},{(sqrt(4/sqrt(54.5)))*exp(-4/sqrt(54.5)*(\x)*(\x))});
  \draw[scale=2,domain=-3:3,samples=55,smooth,variable=\x,black, thick] plot ({\x},{(sqrt(4/sqrt(7.5)))*exp(-4/sqrt(7.5)*(\x)*(\x))});
  \draw[scale=2,domain=-3:3,samples=55,smooth,variable=\x,black, thick] plot ({\x},{(sqrt(4/sqrt(15.5)))*exp(-4/sqrt(15.5)*(\x)*(\x))});] 
 \draw[scale=2,domain=-3:3,samples=55,smooth,variable=\x,black, thick] plot ({\x},{(sqrt(4/sqrt(32.5)))*exp(-4/sqrt(32.5)*(\x)*(\x))});
\end{tikzpicture}
\caption{Flow of an expanding heat kernel along a geodesic.}
\end{center}
\end{figure}

It remains to find the right value of $n$ for which $\mu^n \geq c$. It is easy to see that this generally scales like $n \sim 1/t^2$ for sufficiently nice $\mu$
and it remains to show that $\mu^n$ is sufficiently nice for $n$ large enough assuming that $\mu^{\ell}$ has an absolutely continuous part for some $\ell$.
Summarizing, the heat kernel flattens and will eventually cover the entire torus; this requires roughly $1/t^2$ convolutions
which follows from the usual scaling that the variance of $\mu^k$ scales as $\sqrt{k}$. It is not surprising that a 
drift should allow for this exploration of the torus to happen at a different time scale.

\subsection{Random variables with drift: proof of Theorem 4}
\begin{proof} The argument of Nayar \& Tkocz used the contraction property to establish
$$ \|f - f*\mu^{k}\|_{L^p(\mathbb{T})} \leq k\|f - f*\mu\|_{L^p(\mathbb{T})}.$$
We will work with an averaged version of that statement: summing from $k=1, \dots, n$, we get
$$ \frac{1}{n} \sum_{k=1}^{n}{\|f - f*\mu^{k}\|_{L^p(\mathbb{T})}} \leq n\|f - f*\mu\|_{L^p(\mathbb{T})}.$$
Another application of the triangle inequality implies
$$ \left\| f - f*\left(\frac{1}{n}\sum_{k=1}^{n}\mu^{k}\right)\right\|_{L^p(\mathbb{T})} \leq
\frac{1}{n} \sum_{k=1}^{n}{\|f - f*\mu^{k}\|_{L^p(\mathbb{T})}}  \leq  n\|f-f*\mu\|_{L^p(\mathbb{T})}.$$
The conclusion of the argument proceeds in the same way (though, obviously by a different method): if we can conclude that 
$$ \frac{1}{n}\sum_{k=1}^{n}\mu^{k} \geq c \quad \mbox{pointwise, then} \quad \|f-f*\mu\|_{L^p(\mathbb{T})} \geq \frac{c}{n}\| f\|_{L^p(\mathbb{T}^d)}.$$

It remains to show that the statement is asymptotically true for $n \sim t^{-1}$ to conclude the result. We show that a result
from renewal theory can now be applied relatively easily.
Define $Y_{\mathbb{R}}$ to be $Y$ on $[-1/2, 1/2] \subset \mathbb{R}$ (with corresponding measure $\mu_{\mathbb{R}}$) and assume furthermore w.l.o.g. that $\mathbb{E}Y_{\mathbb{R}} > 0$.
We first note that if some convolution $$Y^{\ell} = Y_1 + Y_2 + \dots + Y_{\ell}$$ on the torus has an absolutely continuous component, then so will $Y_{\mathbb{R}}^{\ell}$ on $\mathbb{R}$: this follows
immediately from the Radon-Nikodym theorem and the fact that the singular measure is supported on a set of measure 0 (moreover, $Y_{\mathbb{R}}^{l}$
is supported on $[-\ell/2, \ell/2]$). This allows us to apply a result of C. Stone \cite{stone}: suppose $\mu_{\mathbb{R}}$ 
\begin{enumerate}
\item has a finite $m-$th moment 
\item $\mu_{\mathbb{R}}^{\ell}$ has a nontrivial continuous component for some $\ell \in \mathbb{N}$ 
\item and if furthermore $\mathbb{E} \mu_{\mathbb{R}} > 0$ 
\end{enumerate}
then the measure $\sum_{k=1}^{\infty}\mu^{k}$ can be written as
$$\sum_{k=1}^{\infty}\mu^{k} = \nu_1 + \nu_2,$$
where $\nu_1$ is absolutely continuous and has a density $p(x)$ satisfying
$$ p(x) = \begin{cases} o(|x|^{-m}) \qquad &\mbox{as}~x \rightarrow -\infty \\
(\mathbb{E}Y_{\mathbb{R}})^{-1}+ o(|x|^{-m}) \qquad &\mbox{as}~x \rightarrow \infty
\end{cases}$$
and $\nu_2$ is a finite measure such that all integer moments exist.

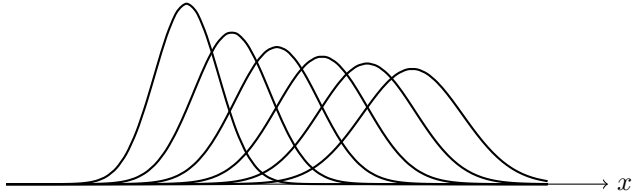
\begin{figure}[h!]
\begin{center}
\begin{tikzpicture}[scale=0.8]
  \draw[->] (-2,0) -- (7,0) node[right] {$x$};
  \draw[scale=1.5,domain=-2:4,samples=55,smooth,variable=\x,black, thick] plot ({\x},{2*exp(-4*\x*\x)});
  \draw[scale=1.5,domain=-2:4,samples=55,smooth,variable=\x,black, thick] plot ({\x},{(sqrt(4/sqrt(2))*exp(-4/sqrt(2)*(\x-0.5)*(\x-0.5))});
  \draw[scale=1.5,domain=-2:4,samples=55,smooth,variable=\x,black, thick] plot ({\x},{(sqrt(4/sqrt(3))*exp(-4/sqrt(3)*(\x-1)*(\x-1))});
  \draw[scale=1.5,domain=-2:4,samples=55,smooth,variable=\x,black, thick] plot ({\x},{(sqrt(4/sqrt(4))*exp(-4/sqrt(4)*(\x-1.5)*(\x-1.5))});
  \draw[scale=1.5,domain=-2:4,samples=55,smooth,variable=\x,black, thick] plot ({\x},{(sqrt(4/sqrt(5))*exp(-4/sqrt(5)*(\x-2)*(\x-2))});
  \draw[scale=1.5,domain=-2:4,samples=55,smooth,variable=\x,black, thick] plot ({\x},{(sqrt(4/sqrt(6))*exp(-4/sqrt(6)*(\x-2.5)*(\x-2.5))});
\end{tikzpicture}
\caption{Heat kernels with drift.}
\end{center}
\end{figure}

 The result even says slightly more and allows to conclude exponential decay
of $\nu_2$ as well as $p(x) - 1/(\mathbb{E}Y_{\mathbb{R}})$ from exponential decay of $\mu_{\mathbb{R}}$ (which is something that is the case in our setting since
$\mu_{\mathbb{R}}$ is compactly supported but we will not use this fact). We now localize that statement (so as to avoid summing up to infinity). 
Note that $\mu_{\mathbb{R}}^{k}$ has a large proportion of its mass centered in an interval of length $\sim \sqrt{k}$ centered around $k (\mathbb{E}\mu_{\mathbb{R}})$. 
This easily implies that, for $n$ sufficiently large,
$\sum_{k=1}^{n}\mu^{k} $
has an absolutely continuous part with a density $p(x)$ satisfying
$$ p(x) \geq \frac{1}{2}\frac{1}{\mathbb{E} Y_{\mathbb{R}}} \qquad \mbox{for all} \qquad x_0 \leq x \leq \frac{n}{2}\mathbb{E}Y_{\mathbb{R}},$$
where $x_0$ is a universal number depending only on $Y$ (essentially the threshold beyond which the Stone asymptotic becomes
effective). We remark that much sharper asymptotics could be derived and one could prove the same result for
$$x_0 \leq x \leq (n-C\sqrt{n})\mathbb{E}Y_{\mathbb{R}} \qquad \mbox{for some fixed}~C>0~\mbox{sufficiently large}$$
but this is not required. 
Applying this to $t Y_{\mathbb{R}}$ is very simple because the
entire setup scales nicely under dilations. If we define $\mu_t$ as the probability measure of $tY_{\mathbb{R}}$. Then
$\mu_t$ has an absolutely continuous component $p_t(x)$ satisfying
$$ p_t(x) \geq \frac{1}{2}\frac{1}{t}\frac{1}{\mathbb{E} Y_{\mathbb{R}}} \qquad \mbox{for all} \qquad tx_0 \leq x \leq \frac{n}{2}t\mathbb{E}Y_{\mathbb{R}}.$$
Let now $n =8/(t \mathbb{E} Y_{\mathbb{R}}) + 2x_0/(t \mathbb{E} Y_{\mathbb{R}})$. We can conclude that
$$ p_t(x) \geq \frac{1}{2}\frac{1}{t}\frac{1}{\mathbb{E} Y_{\mathbb{R}}} \qquad \mbox{on an interval of length at least 2 on}~\mathbb{R}.$$
Since $n \sim 1/t$, this now implies that
 $$ \frac{1}{n}\sum_{k=1}^{n}{\mu_{t,\mathbb{R}}^{k}} \qquad \mbox{has an absolutely continuous component with density}~\geq c$$
for some $c>0$ (that could be described in terms of quantities above) on an interval of length at least 2 on $\mathbb{R}$.
By projecting down on the torus, we have that 
 $$ \frac{1}{n}\sum_{k=1}^{n}{\mu_t^{k}} \qquad \mbox{has an absolutely continuous component with density}~\geq c$$
and this implies the desired result by the reasoning described above.
\end{proof}

\subsection{Concluding Remarks}
Let us try to understand the arguments above in a more general context; the natural framework seems to be that of a compact Lie group and random variables
for which the central limit theorem is valid. We are interested in the largest possible function $h:\mathbb{R}_{+} \rightarrow \mathbb{R}_{+}$ such that there exists a constant $c > 0$ depending only on $Y$ with
the property that
for all $f \in L^p$, $1 \leq p \leq \infty$ satisfying $\int_{}{f} = 0$
$$ \| f(x) - \mathbb{E}f(x + tY) \|_{L^p}\geq h(t)\| f \|_{L^p}.$$
The argument of Nayar \& Tkocz implies that if $\mathbb{E}Y = 0 \in \mathbb{T}^d$ the best answer is given by $h(t) \sim t^2$ and this follows from the fact that the characteristic time for the heat kernel to effectively cover the entire torus is of scale $t^{-2}$. If $\mathbb{E} Y \neq 0$, then the situation becomes more interesting. 
\begin{center}
\begin{figure}[h!]
\centering
\begin{tikzpicture}[scale=4]

\draw[ultra thick] (0,0) -- (1,0);
\draw[ultra thick] (1,1) -- (1,0);
\draw[ultra thick] (1,1) -- (0,1);
\draw[ultra thick] (0,1) -- (0,0);
\draw[thick] (0,0) -- (1,1/1.4142);
\draw [thick] (0.2,0.1414) circle (0.03 cm);
\draw [thick] (0.4,0.2828) circle (0.042 cm);
\draw [thick] (0.6,0.4242) circle (0.051 cm);
\draw [thick] (0.8,0.5656) circle (0.06 cm);
\draw [thick] (0,0.707) circle (0.067 cm);
\draw [thick] (0.2,0.8485) circle (0.073 cm);
\draw [thick] (0.4,0.9899) circle (0.079 cm);
\draw [thick] (0.6,0.131371) circle (0.084 cm);
\draw [thick] (0.8,0.2727) circle (0.09 cm);
\draw [thick] (0,0.4142) circle (0.094 cm);
\draw [thick] (0.2,0.5556) circle (0.0994 cm);
\draw [thick] (0.4,0.6970) circle (0.1039 cm);
\draw [thick] (0.6,0.8384) circle (0.1081 cm);
\draw [thick] (0.8,0.9798) circle (0.1122 cm);
\draw [thick] (0,0.1213) circle (0.1161 cm);
\draw [thick] (0.2,0.2627) circle (0.12 cm);
\draw [thick] (0.4,0.4041) circle (0.123 cm);
\draw [thick] (0.6,0.5455) circle (0.1272 cm);
\draw [thick] (0.8,0.687) circle (0.13072 cm);

\draw[thick] (0.4142,1) -- (0,1/1.4142);
\draw[thick] (0.4142,0) -- (1,0.59/1.4142);
\draw[thick] (0,0.59/1.4142) -- (0.824208,1);
\draw[thick] (0.824208,0) -- (1,0.1243) ;
\draw[thick] (0,0.1243) -- (1,0.831416);

\end{tikzpicture}
\caption{Location of the mass of the first few convolutions $\mu^{\ell}$.}
\end{figure}
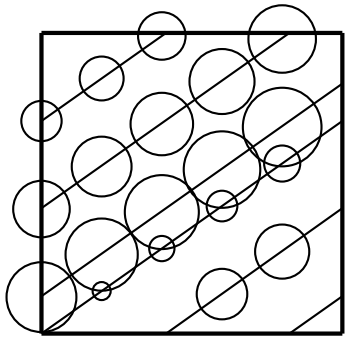
\end{center}
Using standard approximations, we may assume that $\mu_t^k$ ($\mu_t$ again being the probability measure 
of $tY$) has most of its mass around $t \mathbb{E}Y$ in a radius of scale $\sim \sqrt{t}$. This may be best understood as a moving and expanding bubble. The
problem is to understand the number of bubbles $N$ required to have
$$ \frac{1}{N} \sum_{k=1}^{N}{\mu^k} \geq c.$$

On $\mathbb{T}^2$ simple volumetric considerations imply that $N \gtrsim t^{-1}$. However, it is also not difficult to see that $N \lesssim t^{-1}$ is false, even for badly approximable $\mathbb{E}Y$: this would 
imply $h(t) \gtrsim t$, which in turn would imply a Poincar\'{e} type inequality
$$  \left\| \left\langle \nabla f, \alpha \right\rangle \right\|_{L^p} \gtrsim \|f\|_{L^p},$$
which we know to fail even for badly approximable directions. Another interesting example is the sphere, where one cannot improve on $h(t) \gtrsim t^2$
because every geodesic flow is periodic. It seems interesting to understand how the precise quantitative behavior in $t$ depend on approximation properties of $\mathbb{E} Y$ 
and, more generally, how such problems behave in different geometries.\\

\textbf{Acknowledgement.} I am grateful to Raphy Coifman for various discussions and to him, Yves Meyer and Jacques Peyri\`{e}re for their encouragement.

\end{document}